\definecolor{darkgreen}{rgb}{0,0.45,0}
\newcommand{\cat}[1]{\mathbf{#1}}
\newcommand{\op}{\mathrm{op}}
\newcommand{\id}{\mathrm{id}}
\newcommand{\thg}{{\mathord{\text{--}}}}
\newcommand{\set}[2]{\left\{\,#1 \ \vrule\  #2\,\right\}}
\newcommand{\spn}[1]{{\left<{#1}\right>}}
\newcommand{\cd}[2][]{\vcenter{\hbox{\xymatrix#1{#2}}}}
\newcommand{\ccd}[2][]{\hbox{\xymatrix#1{#2}}}
\newcommand{\A}{{\mathcal A}}
\newcommand{\C}{{\mathcal C}}
\newcommand{\D}{{\mathcal D}}
\newcommand{\I}{{\mathcal I}}
\newcommand{\K}{{\mathcal K}}
\newcommand{\ELL}{{\mathcal L}}
\newcommand{\N}{{\mathcal N}}
\newcommand{\OH}{{\mathcal O}}
\newcommand{\R}{{\mathcal R}}
\newcommand{\T}{{\mathcal T}}
\newcommand{\pullbackcorner}[1][dr]{\save*!/#1+1.2pc/#1:(1,-1)@^{|-}\restore}
\newcommand{\pushoutcorner}[1][dr]{\save*!/#1-1.2pc/#1:(-1,1)@^{|-}\restore}
\newtheorem{Thm}{Theorem}
\newtheorem{Prop}[Thm]{Proposition}
\theoremstyle{definition}
\newtheorem{Ex}[Thm]{Example}
\newtheorem{Rk}[Thm]{Remark}
\newcommand{\wo}{\mathrel \pitchfork}
\begin{document}

\title[A homotopy-theoretic universal property\dots]{A homotopy-theoretic universal property\\of Leinster's operad for weak $\omega$-categories}
\author
       {Richard Garner}
\address{       Department of Pure Mathematics and Mathematical Statistics, Wilberforce Road,
       Cambridge CB3 0WB, UK}\email{rhgg2@cam.ac.uk}
     
%\amsclass{18D50, 55U35, 18D05}
\begin{abstract}
We explain how any cofibrantly generated weak factorisation system on a
category may be equipped with a universally and canonically determined choice
of cofibrant replacement. We then apply this to the theory of weak
$\omega$-categories, showing that the universal and canonical cofibrant
replacement of the operad for strict $\omega$-categories is precisely
Leinster's operad for weak $\omega$-categories.
\end{abstract}
\thanks{The author acknowledges the support of a Marie Curie Intra-European
Fellowship, Project No.\ 040802, and a Research Fellowship of St John's
College, Cambridge.}
 \maketitle

\section{Introduction}
One of the most interesting aspects of modern homotopy theory is the general
machinery it provides for replacing some piece of algebraic structure with a
``weakened'' version of that same structure. The picture is as follows: we
begin with a category~$\C$ equipped with a notion of higher-dimensionality
coming from a model structure in the sense of
Quillen~\cite{Quillen1967Homotopical}. We now contemplate some notion of
algebraic theory on~$\C$: monads, operads, or Lawvere theories on~$\C$, for
example. These algebraic theories themselves form a category~$\cat{Th}(\C)$,
and by making use of various transfer techniques we obtain a model structure
on~$\cat{Th}(\C)$ from the one on~$\C$. Now, for a particular algebraic theory
$T \in \cat{Th}(\C)$, we obtain a weakened version of this theory by taking a
cofibrant replacement for~$T$ in the category~$\cat{Th}(\C)$. A cofibrant
replacement is a generalised projective resolution: and so the effect this has
is to transform each algebraic law satisfied by the theory~$T$ into a piece of
higher-dimensional data witnessing the weak satisfaction of that same law, with
all this extra data fitting together in a coherent way.

The remarkable thing about this machinery is how little it requires to get
going. All we need is a category~$\C$, a notion of algebraic structure, and a
notion of higher-dimensionality; and for this last, we do not even need a full
model structure on~$\C$. A single weak factorisation
system~\cite{Bousfield1977Constructions} will do, and for a sufficiently well
behaved (typically, locally presentable)~$\C$ we may obtain this by using the
small object argument of Quillen~\cite{Quillen1967Homotopical} and
Bousfield~\cite{Bousfield1977Constructions}: for which it suffices to specify a
set of generating higher-dimensional cells in~$\C$, together with their
boundaries and the inclusions of the latter into the former. Moreover, it is
frequently the case that $\C = [\D^\op, \cat{Set}]$ for some Reedy
category~$\D$~\cite{Hirschhorn2003Model, Reedy1974Homotopy}, in which case we
have canonical notions of both \emph{cell} (the representable presheaves) and
\emph{boundary} (arising from the Reedy structure).

Yet this rather appealing construction has a problem, which arises when we ask
what ``the'' weakened version of a particular algebraic theory~$T$ is. Because
cofibrant replacements need not be unique, even up to isomorphism, we may only
legitimately talk of ``a''~weakened version of~$T$; and so it becomes pertinent
to ask which one we choose. The usual answer given is that we don't really
care, since all the choices are essentially equivalent: but since the point of
being algebraic is in some sense to ``pin down everything that can be pinned
down'', it seems perverse that we should be so hazy on this particular point.

The obvious solution is to make a definite choice of cofibrant replacements
in~$\cat{Th}(\C)$: and if~--~as is almost always the case~--~the weak
factorisation system under consideration was constructed using the small object
argument, then it may be equipped with such a choice. Yet the situation is not
entirely satisfactory for two reasons. Firstly, the cofibrant replacements we
obtain are in no way canonical, since the induction which constructs them is
governed by some (sufficiently large) regular cardinal~$\alpha$, with different
choices of~$\alpha$ leading to different cofibrant replacements. Secondly and
more importantly, the cofibrant replacements we obtain are neither particularly
natural nor computationally tractable. In principle, it would be possible to
reason about them by induction; but in practice, this would require some rather
strange combinatorics of a nature entirely orthogonal to that of the
mathematics one was trying to do.

However, recent work of Grandis \& Tholen~\cite{Grandis2006Natural} and the
author~\cite{Garner2008Understanding} suggests a solution to this problem.
Using the results of~\cite{Garner2008Understanding}, we may equip any
reasonable (which is to say, cofibrantly generated) weak factorisation system
with a canonical and universal notion of cofibrant replacement. The canonicity
says that we need specify no additional information beyond the set of
generating cells and boundaries; whilst the universality tells us that the
cofibrant replacements we obtain are rather natural, and in particular admit a
straightforward calculus of inductive reasoning.

In this note, we first explain the technology behind these universal
co\-fibrant replacements, and then illustrate their utility by means of an
example drawn from the study of weak \mbox{$\omega$-categories}. More
specifically, we consider Batanin's theory of globular
operads~\cite{Batanin1998Monoidal}, and by using the machinery outlined above,
obtain a canonical and universal notion of cofibrant replacement for globular
operads. We then show that applying this to the globular operad for strict
$\omega$-categories yields precisely the operad singled out by
Leinster~\cite{Leinster2004Operads} as the operad  for weak
$\omega$-categories.

\section{Weak factorisation and cofibrant replacement}
\subsection{Weak factorisation systems}
A \emph{weak factorisation system} \cite{Bousfield1977Constructions} $(\ELL,
\R)$ on a category~$\C$ is given by two classes $\ELL$ and~$\R$ of morphisms
in~$\C$ which are each closed under retracts when viewed as full subcategories
of the arrow category $\C^\mathbf 2$, and which satisfy the two axioms of
\begin{enumerate}
\item[(i)] \emph{factorisation}: each $f \in \C$ may be written as $f = pi$
    where $i \in \ELL$ and $p \in \R$; and
\item[(ii)] \emph{weak orthogonality}: for each $i \in \ELL$ and $p \in
    \R$, we have $i \wo p$,
\end{enumerate}
where to say that $i \wo p$ holds is to say that for each commutative square
\begin{equation}\tag{$\star$} \label{cs}
    \cd{
      U \ar[r]^{f} \ar[d]_{i} &
      W \ar[d]^{p} \\
      V \ar[r]_{g}  &
      X
    }
\end{equation}
we may find a filler $j \colon V \to W$ satisfying $ji = f$ and $pj = g$. For
those weak factorisation systems that we will be considering, the following
terminology will be appropriate: the maps in $\ELL$ we call
\emph{cofibrations}, and the maps in $\R$, \emph{acyclic fibrations}. Supposing
$\C$ to have an initial object $0$, we say that $U \in \C$ is \emph{cofibrant}
just when the unique map $0 \to U$ is a cofibration; and define a
\emph{cofibrant replacement} for $X \in \C$ to be a factorisation of the unique
map $0 \to X$ as a cofibration followed by an acyclic fibration:
\begin{equation*}
    0 \xrightarrow {} X' \xrightarrow p X\text.
\end{equation*}
The principal tool we use for the construction of weak factorisation systems is
the following result, first proved by Quillen in the finitary case
\cite[Chapter II, \S 3]{Quillen1967Homotopical} and in the following
transfinite form by Bousfield \cite{Bousfield1977Constructions}. For a modern
account, see \cite{Hovey1999Model}, for example.

\begin{Prop}[The small object argument]\label{smallobj} Let $\C$ be a locally
presentable category, and let $I$ be a set of maps in $\C$. Define classes of
maps $I^\downarrow$ and $I^{\downarrow\uparrow}$ by
\begin{equation*}
    I^{\downarrow} := \set{p \in \C^\mathbf 2}{j \wo p \text{ for all $j \in I$}}
\end{equation*}
and
\begin{equation*}
    I^{\downarrow \uparrow} := \set{i \in \C^\mathbf 2}{i \wo p \text{ for all $p \in \smash{I^\uparrow}$}}\text.
\end{equation*}
Then the pair $(I^{\downarrow \uparrow}, I^{\downarrow})$ is a weak
factorisation system on $\C$.
\end{Prop}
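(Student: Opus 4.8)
The plan is to verify the three defining conditions of a weak factorisation system in turn, observing that two of them are essentially formal and that all the genuine work lies in the factorisation axiom. First, weak orthogonality is immediate: by construction $I^{\downarrow\uparrow}$ consists of precisely those maps $i$ with $i \wo p$ for every $p \in I^{\downarrow}$, so $i \wo p$ holds for any $i \in I^{\downarrow\uparrow}$ and $p \in I^{\downarrow}$ by definition. Closure under retracts is equally cheap: any class of maps cut out by a right (respectively, left) lifting condition is closed under retracts, by a diagram chase that transports a chosen filler across the retract data. Since $I^{\downarrow}$ and $I^{\downarrow\uparrow}$ are defined by such conditions, both are retract-closed when regarded as full subcategories of $\C^\mathbf 2$.

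The substance of the proposition is the factorisation axiom, which I would establish by Quillen and Bousfield's transfinite cell-attachment construction. Since $\C$ is locally presentable and $I$ is a \emph{set}, I may choose a single regular cardinal $\lambda$ for which the domain of every $j \in I$ is $\lambda$-presentable, so that $\Hom(U, \thg)$ preserves $\lambda$-filtered colimits for each such domain $U$. Given a map $f \colon C \to D$ to be factored, I would build a $\lambda$-indexed chain $C = E_0 \to E_1 \to \cdots$ over $D$: at each successor stage I form the pushout of $E_\alpha \to D$ along the coproduct $\coprod j$ indexed by the (small) set of all commutative squares from some $j \in I$ into the current map $E_\alpha \to D$, and at limit stages I take colimits, which exist since $\C$ is cocomplete. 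Writing $E := \colim_{\alpha < \lambda} E_\alpha$ and letting $p \colon E \to D$ be the induced map, the first factor $i \colon C \to E$ is by construction a transfinite composite of pushouts of coproducts of members of $I$; since the inclusion $I \subseteq I^{\downarrow\uparrow}$ holds trivially and the class $I^{\downarrow\uparrow}$, being a left lifting class, is closed under coproducts, pushouts and transfinite composition, we conclude that $i \in I^{\downarrow\uparrow}$.

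It remains to show that $p \in I^{\downarrow}$, and this is the one step where the hypotheses genuinely bite. Given any square from a generator $j \colon U \to V$ in $I$ into $p$, I must produce a diagonal filler. Here the choice of $\lambda$ pays off: as $\lambda$ is regular the chain is $\lambda$-filtered, so the $\lambda$-presentability of $U$ forces the top edge $U \to E$ to factor through some earlier stage $U \to E_\alpha$ with $\alpha < \lambda$. But then the resulting square over $E_\alpha \to D$ is one of those along which a cell was attached in passing to $E_{\alpha+1}$, so the corresponding pushout coprojection supplies a map $V \to E_{\alpha+1}$; composing with $E_{\alpha+1} \to E$ yields the required filler, the two triangle identities following from the pushout square together with the universal property of $E$. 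The main obstacle is thus this presentability bookkeeping~--~arranging a single $\lambda$ that works for all generators simultaneously, and checking that the chain really is a $\lambda$-filtered colimit so that the factorisation of $U \to E$ exists~--~while the remainder reduces to the formal closure properties of lifting classes and routine diagram chases.
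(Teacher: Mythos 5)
Your proof is correct and is precisely the classical Quillen--Bousfield small object argument that the paper itself does not reproduce but simply cites (Quillen for the finitary case, Bousfield for the transfinite form, with Hovey as a modern reference): formal retract-closure and orthogonality for the lifting classes, then the transfinite cell-attachment chain of length a regular cardinal $\lambda$ chosen so that all domains of maps in $I$ are $\lambda$-presentable, with the lifting against the second factor extracted by factoring through an earlier stage of the $\lambda$-filtered chain. The one point worth noting is that you silently (and correctly) read the paper's $I^{\uparrow}$ in the definition of $I^{\downarrow\uparrow}$ as a typo for $I^{\downarrow}$, which is indeed what the statement intends.
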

We call $I$ the set of \emph{generating cofibrations} for $(I^{\downarrow
\uparrow}, I^\downarrow)$, and given a map $i \colon U \to V$ in $I$, we call
$V$ a \emph{generating cell} and $U$ its \emph{boundary}. To say that a weak
factorisation system $(\ELL, \R)$ is \emph{cofibrantly generated} is to say
that there is some set $I$ for which $(\ELL, \R) = (I^{\downarrow \uparrow},
I^\downarrow)$. Observe that this $I$ will usually not be unique; however, this
is not a problem since we typically begin with the set $I$ and generate the
weak factorisation system from it, rather than vice versa.

\subsection{Functorial w.f.s.'s} Given a w.f.s.\ $(\ELL, \R)$ on a category $\C$, it may be the case that
for each morphism $f \colon X \to Y$ of $\C$, we can provide a choice
\begin{equation*}
    X \xrightarrow{\lambda_f} Kf \xrightarrow{\rho_f} Y
\end{equation*}
of $(\ELL, \R)$ factorisation for $f$. Suppose this is so; then by weak
orthogonality, we know that for each square as on the left of the following
diagram, there exists a filler for the corresponding square on the right:
\begin{equation*}
    \cd{
      U \ar[r]^{h} \ar[d]_{f} &
      W \ar[d]^{g} \\
      V \ar[r]_{k} &
      X
    } \qquad \dashrightarrow \qquad
    \cd{
      U \ar[r]^{\lambda_g. h} \ar[d]_{\lambda_f} &
      Kg \ar[d]^{\rho_g} \\
      Kf \ar[r]_{k . \rho_f} \ar@{.>}[ur] &
      X\text.
    }
\end{equation*}
It may now be that we can choose a diagonal filler $K(h, k) \colon Kf \to Kg$
for each such square: and that we can do so in such a way that the assignations
$f \mapsto Kf$ and \mbox{$(h, k) \mapsto K(h, k)$} underlie a functor $K \colon
\C^\mathbf 2 \to \C$. If this is so, then the maps $\lambda_f$ and $\rho_f$
necessarily provide the components of natural transformations $\lambda \colon
\mathrm{cod} \Rightarrow K$ and $\rho \colon K \Rightarrow \mathrm{dom}$; and
we call the triple $(K, \lambda, \rho)$ so obtained a \emph{functorial
realisation} of $(\ELL, \R)$.

\begin{Prop}\label{realisation}Let $\C$ be a locally
presentable category, and let $I$ be a set of maps in $\C$. Then for each
choice of a sufficiently large regular cardinal $\alpha$, the small object
argument provides us with a functorial realisation $(K^{(\alpha)},
\lambda^{(\alpha)}, \rho^{(\alpha)})$ of the weak factorisation system
$(I^{\downarrow\uparrow}, I^\downarrow)$.
\end{Prop}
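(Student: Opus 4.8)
The plan is to obtain $(K^{(\alpha)}, \lambda^{(\alpha)}, \rho^{(\alpha)})$ as the output of the \emph{functorial} form of the small object argument; the crucial observation is that by indexing over \emph{all} lifting problems at once, rather than choosing a solution to each in turn, every stage of the construction is manifestly functorial in $f \in \C^\mathbf 2$, so that no arbitrary choices remain to obstruct the naturality of $\lambda^{(\alpha)}$ and $\rho^{(\alpha)}$. First I would set up the one-step construction. For $f \colon X \to Y$, let $\mathrm{Sq}(f)$ be the set of triples $(i, a, b)$ in which $i \colon U_i \to V_i$ lies in $I$ and $(a, b)$ is a commutative square from $i$ to $f$, and form the pushout
\[
\cd{
  \coprod_{(i,a,b) \in \mathrm{Sq}(f)} U_i \ar[r] \ar[d] & X \ar[d]^{\lambda_f} \\
  \coprod_{(i,a,b)} V_i \ar[r] & Lf
}
\]
whose universal property yields an induced map $Lf \to Y$ factoring $f$ as $X \xrightarrow{\lambda_f} Lf \to Y$. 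A morphism $(h, k) \colon f \to g$ of $\C^\mathbf 2$ sends a lifting problem $(i, a, b)$ to $(i, ha, kb)$; as this preserves the first coordinate it reindexes the two coproducts compatibly with their maps into $X$ and $Y$, and the pushout property then produces $Lf \to Lg$, so that $L$ is a functor and $\lambda$ is natural.

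Next I would iterate transfinitely. Regarding $L$ as a functor $\C^\mathbf 2 \to \C^\mathbf 2$ carrying $f$ to the residual map $Lf \to Y$, I set $K_0 f = X$, pass from stage $\beta$ to stage $\beta + 1$ by applying this functor to the residual map $K_\beta f \to Y$, and take colimits at limit stages. Each of these operations preserves functoriality, so the transfinite composite $K^{(\alpha)} f := \colim_{\beta < \alpha} K_\beta f$ is functorial in $f$, equipped with a natural $\lambda^{(\alpha)} \colon \mathrm{dom} \Rightarrow K^{(\alpha)}$ and an induced natural $\rho^{(\alpha)} \colon K^{(\alpha)} \Rightarrow \mathrm{cod}$. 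That $\lambda^{(\alpha)}_f$ lies in $I^{\downarrow\uparrow}$ is then formal: it is a transfinite composite of pushouts of coproducts of members of $I$, and $I^{\downarrow\uparrow}$, being the left class of the weak factorisation system of Proposition~\ref{smallobj}, is closed under each of these constructions.

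The substantive step is to verify $\rho^{(\alpha)}_f \in I^\downarrow$, and it is this that fixes the meaning of ``sufficiently large''. Since $\C$ is locally presentable, every object is presentable, so I may choose a regular cardinal $\alpha$ for which each domain $U_i$ with $i \in I$ is $\alpha$-presentable. Given a square from such an $i$ to $\rho^{(\alpha)}_f$, its top edge $U_i \to K^{(\alpha)} f$ factors through some stage $K_\beta f$; the resulting square over the residual map $K_\beta f \to Y$ is then one of the lifting problems indexed by $\mathrm{Sq}$ at stage $\beta$, hence is solved by the coprojection into the pushout defining $K_{\beta + 1} f$, and composing this solution with the comparison $K_{\beta + 1} f \to K^{(\alpha)} f$ supplies the required diagonal. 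Thus $i \wo \rho^{(\alpha)}_f$ for every $i \in I$, i.e.\ $\rho^{(\alpha)}_f \in I^\downarrow$, completing the factorisation.

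I expect the main obstacle to be this last smallness argument. One must use the regularity of $\alpha$ to see that the chain $(K_\beta f)_{\beta < \alpha}$ is $\alpha$-directed, so that the top edge of an arbitrary square really does factor through a single bounded stage $K_\beta f$; and one must then check that the canonical solution produced at stage $\beta + 1$, once composed with the colimit comparison, is a filler for the \emph{original} square and not merely for its restriction to stage $\beta$. By contrast functoriality, although it is the genuine content of the proposition, should emerge automatically from the choice-free one-step construction, since all the building blocks---coproducts, pushouts and colimits---are themselves functorial.
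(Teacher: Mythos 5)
Your proof is correct and is precisely the construction the paper has in mind: the paper gives no detailed argument for Proposition~\ref{realisation} beyond the remark that it ``falls out of the construction used in the small object argument'', with the regular cardinal $\alpha$ fixing the length of the transfinite induction, and your choice-free one-step pushout over all squares $\mathrm{Sq}(f)$, its transfinite iteration, and the $\alpha$-presentability argument showing $\rho^{(\alpha)}_f \in I^\downarrow$ are exactly the standard details being invoked. The smallness check you flag at the end (regularity of $\alpha$ making the chain $\alpha$-directed, and the stage-$(\beta+1)$ coprojection solving the original square after composition with the colimit comparison) is indeed the only substantive point, and you handle it correctly.
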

\noindent The proof falls out of the construction used in the small object
argument. The regular cardinal $\alpha$ that we provide serves to fix the
length of the transfinite induction by which factorisations are constructed.
Note that the functorial realisation we obtain depends not only upon $\alpha$
but also upon the particular set $I$ of generating cofibrations that we choose.

\begin{Rk} \label{rk1}It was shown in \cite[\S 2.4]{Rosick'y2002Lax} that the data $(K, \lambda, \rho)$ for a functorial
realisation completely determines the underlying w.f.s.\ $(\ELL, \R)$. To see
this, we define two auxiliary functors $L, R \colon \C^\mathbf 2 \to \C^\mathbf
2$ whose action on objects is given by
\begin{equation*}
    L\left(\cd{X \ar[d]^f \\ Y}\right) = \cd{X \ar[d]^{\lambda_f} \\ Kf}
\qquad \text{and} \qquad
    R\left(\cd{X \ar[d]^f \\ Y}\right) = \cd{Kf \ar[d]^{\rho_f} \\ Y\text;}
\end{equation*}
and two auxiliary natural transformations $\Lambda \colon \id_{\C^\mathbf 2}
\Rightarrow R$ and $\Phi \colon L \Rightarrow \id_{\C^\mathbf 2}$ whose
respective components at $f \colon X \to Y$ are:
\begin{equation*}
    \Lambda_f = \cd{X \ar[d]_f \ar[r]^{\lambda_f} & Kf \ar[d]^{\rho_f} \\ Y \ar[r]_{\id_Y} & Y}
\qquad \text{and} \qquad
    \Phi_f = \cd{X \ar[d]_{\lambda_f} \ar[r]^{\id_X} & X \ar[d]^{f} \\
Kf \ar[r]_{\rho_f} & Y\text.}
\end{equation*}
We may now show that a morphism $f \colon X \to Y$ of $\C$ lies in $\R$ just
when the map $\Lambda_f \colon f \to Rf$ admits a retraction in $\C^\mathbf 2$:
which is to say that $f$ is an algebra for the pointed endofunctor $(R,
\Lambda)$. Dually, we may show that $f$ lies in $\ELL$ just when the map
$\Phi_f \colon Lf \to f$ admits a section: which is to say that $f$ is a
coalgebra for the copointed endofunctor $(L, \Phi)$. As a particular case of
this last fact, if $\C$ has an initial object, then $L \colon \C^\mathbf 2 \to
\C^\mathbf 2$ restricts and corestricts to the coslice $0 / \C \cong \C$ to
yield a \emph{cofibrant replacement functor} $Q \colon \C \to \C$ together with
a copointing $\epsilon \colon Q \Rightarrow \id_\C$; and now an an object $X
\in \C$ is cofibrant just when it may be made into a coalgebra for $(Q,
\epsilon)$; which is to say, just when $\epsilon_X \colon QX \to X$ admits a
retraction in $\C$.
\end{Rk}

\subsection{Natural w.f.s.'s} As we mentioned in the Introduction, the functorial realisations
$(K^{(\alpha)}, \lambda^{(\alpha)}, \rho^{(\alpha)})$ that we obtain from the
small object argument are not very intuitive. One way of rectifying this is
through a further strengthening of the notion of weak factorisation system. We
begin from a w.f.s.\ $(\ELL, \R)$ on a category $\C$ together with a functorial
realisation $(K, \lambda, \rho)$ thereof. Now, since in any w.f.s.\ the classes
of $\ELL$-maps and $\R$-maps are closed under composition, we have fillers for
squares of the following form:
\begin{equation*}
    \cd{
      X \ar[r]^{\lambda_{\lambda_f}} \ar[d]_{\lambda_f} &
      K\lambda_f \ar[d]^{\rho_f.\rho_{\lambda_f}} \\
      Kf \ar[r]_{\rho_f} \ar@{.>}[ur] &
      Y
    } \qquad \text{and} \qquad
    \cd{
      X \ar[r]^{\lambda_f} \ar[d]_{\lambda_{\rho_f}.\lambda_f} &
      Kf \ar[d]^{\rho_f} \\
      K\rho_f \ar[r]_{\rho_{\rho_f}} \ar@{.>}[ur] &
      Y\text,
    }
\end{equation*}
and it may be the case that we can provide a choice of fillers $\sigma_f \colon
Kf \to K\lambda_f$ and $\pi_f \colon K\rho_f \to Kf$ for each such square; and
that we can do so in such a way that the morphisms $\Sigma_f \colon Lf \to LLf$
and $\Pi_f \colon RRf \to Rf$ of $\C^\mathbf 2$ given by
\begin{equation*}
    \Sigma_f =\cd{X \ar[d]_{\lambda_f} \ar[r]^{\id_X} & X \ar[d]^{\lambda_{\lambda_f}} \\
Kf \ar[r]_{\sigma_f} & K\lambda_f}\qquad \text{and} \qquad
    \Pi_f =  \cd{K\rho_f \ar[d]_{\rho_{\rho_f}} \ar[r]^{\pi_f} & Kf \ar[d]^{\rho_f} \\ Y
\ar[r]_{\id_Y} & Y}
\end{equation*}
provide the components at $f$ of natural transformations $\Sigma \colon L
\Rightarrow LL$ and \mbox{$\Pi \colon RR \Rightarrow R$}. Under these
circumstances, it may be that \mbox{$\mathsf R = (R, \Lambda, \Pi)$} describes
a monad on $\C^\mathbf 2$, and $\mathsf L = (L, \Phi, \Sigma)$ a comonad; and
that the natural transformation $\Delta \colon LR \Rightarrow RL \colon
\C^\mathbf 2 \to \C^\mathbf 2$ with components
\begin{equation*}
    \Delta_f =\cd{Kf \ar[d]_{\lambda_{\rho_f}} \ar[r]^{\sigma_f} & X \ar[d]^{\rho_{\lambda_f}} \\
K\rho_f \ar[r]_{\pi_f} & Kf}
\end{equation*}
describes a distributive law~\cite{Beck1969Distributive} between $\mathsf L$
and $\mathsf R$. Under these circumstances, we will say that $(\mathsf L,
\mathsf R)$ is an \emph{algebraic realisation} of $(\ELL, \R)$. The pairs
$(\mathsf L, \mathsf R)$ arising in this way are the \emph{natural weak
factorisation systems} of \cite{Grandis2006Natural}. Though the requirements
for an algebraic realisation may appear strong, they are in fact rather easily
satisfied:

\begin{Prop}[The refined small object argument] \label{refined}Let $\C$ be a locally
presentable category, and let $I$ be a set of maps in $\C$. Then the weak
factorisation system $(I^{\downarrow\uparrow}, I^\downarrow)$ has an algebraic
realisation $(\mathsf L, \mathsf R)$.
\end{Prop}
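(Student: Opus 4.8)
The plan is to construct the monad $\mathsf R$ first, as a free monad on a pointed endofunctor, and then to read off the comonad $\mathsf L$ and the distributive law $\Delta$ from it by general arguments. First I would build the \emph{one-step} functorial factorisation. Given $f \colon X \to Y$, consider all commutative squares from a generating cofibration $i \colon U \to V$ (with $i \in I$) into $f$; forming the pushout of the span $\coprod V \leftarrow \coprod U \to X$, where both coproducts range over all such squares, yields a factorisation $X \xrightarrow{\lambda^1_f} K_1 f \xrightarrow{\rho^1_f} Y$. This assignment is functorial in $f$, and $\rho^1$ is the object-part of a pointed endofunctor $(R_1, \Lambda^1)$ on $\C^{\mathbf 2}$ with $\Lambda^1 \colon \id \Rightarrow R_1$. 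By the usual adjunction argument for pushouts, an algebra for this pointed endofunctor is precisely a map $p$ equipped with a chosen diagonal filler against every $i \in I$; in particular, its underlying map lies in $I^\downarrow$.

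The central step is to pass from $(R_1, \Lambda^1)$ to the \emph{algebraically-free} monad $\mathsf R = (R, \Lambda, \Pi)$ generated by it, via Kelly's transfinite free-monad construction, so that the $\mathsf R$-algebras coincide with the algebras for the pointed endofunctor $R_1$. This is where the main difficulty lies, and it is exactly what earns the construction the name \emph{refined} small object argument: one must show that the free-monad sequence converges. Convergence is guaranteed by local presentability, since each domain $U$ and codomain $V$ of a map in $I$ is $\alpha$-presentable for some sufficiently large regular cardinal $\alpha$; the representables $\C^{\mathbf 2}(i, \thg)$ then preserve $\alpha$-filtered colimits, and as the tower is built entirely from pushouts and such colimits, it stabilises. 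The resulting $R$ provides a functorial realisation $f = \rho_f \lambda_f$ for which, by Remark~\ref{rk1}, the right class of the underlying w.f.s.\ is exactly the class of $\mathsf R$-algebra underlying maps, namely $I^\downarrow$.

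It remains to extract the comonad and the distributive law. Because the factorisation fixes domains and codomains, I would set $Lf = \lambda_f$ and define $\Phi \colon L \Rightarrow \id$ and the comultiplication $\Sigma \colon L \Rightarrow LL$ using the universal property of the free monad: concretely, the multiplication $\Pi$ of $\mathsf R$ supplies the filler $\pi_f \colon K\rho_f \to Kf$, from which $\sigma_f$, and hence $\Sigma$ and $\Delta$, are canonically read off. The comonad axioms for $\mathsf L$, and the verification that $\Delta$ satisfies the distributive-law coherences, then follow formally from the monad structure of $\mathsf R$, as in \cite{Grandis2006Natural} and \cite{Garner2008Understanding}. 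Finally, by the coalgebra characterisation of Remark~\ref{rk1}, the $\mathsf L$-coalgebras have underlying maps forming $I^{\downarrow\uparrow}$, so that $(\mathsf L, \mathsf R)$ is an algebraic realisation of $(I^{\downarrow\uparrow}, I^\downarrow)$, as required. I expect the convergence argument of the second paragraph, together with the identification of $\mathsf R$-algebras with $I^\downarrow$, to be the real content; the comonad and distributive law in the third paragraph are comparatively formal.
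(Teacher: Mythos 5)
Your construction of the monad $\mathsf R$ agrees with the paper's: the one-step factorisation by pushout over the coproduct of all generating squares, the pointed endofunctor $(R',\Lambda')$ whose algebras are maps with chosen fillers against $I$, and passage to the algebraically-free monad via Kelly's construction \cite{Kelly1980unified}, with convergence secured by accessibility in the locally presentable setting. (One caveat of emphasis: the free-monad sequence for a pointed endofunctor is not the naive iteration tower of the classical small object argument---successor stages involve quotients identifying the two insertions of the point---but since you invoke Kelly's construction by name, this is fine.)

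The genuine gap is in your third paragraph. The comultiplication $\Sigma$, and with it the distributive law $\Delta$, do \emph{not} follow formally from the monad structure of $\mathsf R$: in a natural weak factorisation system the comonad structure on $L$ is independent extra data, not a consequence of the multiplication $\Pi$. Concretely, there is no canonical candidate for $\sigma_f \colon Kf \to K\lambda_f$ manufacturable from $(K,\lambda,\rho)$ and $\pi_f$; the only map functoriality provides between these objects is $K(\id_X, \rho_f) \colon K\lambda_f \to Kf$, which points the wrong way, and there exist functorial factorisations carrying a monad structure on $R$ but no compatible comonad structure on $L$. The paper's proof proceeds differently at exactly this juncture: one first observes that the one-step functor $L' \colon f \mapsto \lambda'_f$ \emph{already} underlies a comonad $\mathsf L'$ on $\C^{\mathbf 2}$---its comultiplication arises from the universal property of the pushout, since every generating square $(h,k) \colon j \to f$ induces a square $j \to \lambda'_f$ whose bottom edge is the coprojection into $K'f$---and then adapts the free-monad construction so that, at the same time as it produces $\mathsf R$ from $(R',\Lambda')$, it transports $\mathsf L'$ through the transfinite sequence to yield $\mathsf L$ and the distributive law. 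Carrying this one-step comonad through the construction is the real content of the refinement, and it is precisely what your ``canonically read off'' step elides. A minor further slip: the underlying maps of $\mathsf L$-coalgebras lie in $I^{\downarrow\uparrow}$ but need not exhaust it (the left class is closed under retracts, whereas comonad coalgebras need not be---cf.\ the paper's own remark that coalgebras are the relatively free things of which cofibrations are retracts); what the definition of algebraic realisation actually requires is only that $\lambda_f \in I^{\downarrow\uparrow}$ and $\rho_f \in I^{\downarrow}$, which follows from Remark~\ref{rk1} via the pointed and copointed endofunctor characterisations.
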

\begin{proof}
For a full proof see \cite[Theorem 4.4]{Garner2008Understanding}: we recall
only the salient details here. We begin exactly as in the small object
argument. Given a map $f \colon X \to Y$ of $\C$ we consider the set
\begin{equation*}
    S := \set{(j, h, k)}{j : A \to B \in I\text,\, (h, k) \colon j \to f \in \C^\mathbf 2}
\end{equation*}
We have a commutative diagram
\begin{equation*}
    \cd[@C+2em]{
    \sum_{x \in S} A_x \ar[d]_{\sum_{x \in S} j_x} \ar[r]^-{\spn{h_x}_{x \in S}} &
    X \ar[d]^f \\
    \sum_{x \in S} B_x \ar[r]_-{\spn{h_x}_{x \in S}} &
    Y}
\end{equation*}
in $\C$; and may factorise this as
\begin{equation*}
    \cd[@C+2em]{
    \sum_{x \in S} A_x \ar[d]_{\sum_{x \in S} j_x} \ar[r]^-{\spn{h_x}_{x \in S}} &
    X \ar[d]^{\lambda'_f} \ar[r]^{\id_X} & X \ar[d]^f \\
    \sum_{x \in S} B_x \ar[r] &
    K'f \ar[r]_{\rho'_f} & Y}
\end{equation*}
where the left-hand square is a pushout. The assignation $f \mapsto \rho'_f$
may now be extended to a functor $R' \colon \C^\mathbf 2 \to \C^\mathbf 2$;
whereupon the map $(\lambda'_f, \id_Y) \colon f \to R'f$ provides the component
at $f$ of a natural transformation $\Lambda' \colon \id_{\C^\mathbf 2}
\Rightarrow R'$. We now obtain the monad part $\mathsf R$ of the desired
algebraic realisation as the free monad on the pointed endofunctor $(R',
\Lambda')$. We may construct this using the techniques
of~\cite{Kelly1980unified}. To obtain the comonad part $\mathsf L$ we proceed
as follows. The assignation $f \mapsto \lambda'_f$ underlies a functor
\mbox{$L' \colon \C^\mathbf 2 \to \C^\mathbf 2$}; and a little manipulation
shows that this functor in turn underlies a comonad $\mathsf{L}'$ on
$\C^\mathbf 2$. We may now adapt the free monad construction so that at the
same time as it produces $\mathsf R$ from $(R', \Lambda')$, it also produces
$\mathsf L$ from~$\mathsf L'$.
\end{proof}

\noindent The algebraic realisation of $(I^{\downarrow\uparrow}, I^\downarrow)$
given in Proposition \ref{refined} satisfies a universal property with respect
to $I$ which determines it up to unique isomorphism. Thus we refer to $(\mathsf
L, \mathsf R)$ as the \emph{universal algebraic realisation of~$I$}. To give
its universal property, we consider algebras for the monad $\mathsf R$. From
Remark~\ref{rk1}, we know that acyclic fibrations coincide with algebras for
the pointed endofunctor $(R, \Lambda)$: and so algebras for the monad $\mathsf
R$ must be  acyclic fibrations equipped with certain extra data. The following
Proposition makes this precise.
\begin{Prop}\label{characteriseralg}Let $(\mathsf L, \mathsf R)$ be the universal algebraic realisation of a
set of maps $I$ as given in Proposition \ref{refined}. To give an algebra for
the monad $\mathsf R$ on $\C^\mathbf 2$ is to give a map $p \colon W \to X$ of
$\C$ together with, for each $i \colon U \to V$ in $I$ and commutative square
\begin{equation*}
    \cd{
      U \ar[r]^{f} \ar[d]_{i} &
      W \ar[d]^{p} \\
      V \ar[r]_{g} &
      X
    }
\end{equation*}
a choice of diagonal filler $j \colon V \to W$, subject to no further
conditions, whilst to give a morphism of $\mathsf R$-algebras is to give a map
of $\C^\mathbf 2$ which strictly commutes with the chosen liftings. Moreover,
this characterisation of the category of $\mathsf R$-algebras determines the
pair $(\mathsf L, \mathsf R)$ up to unique isomorphism.
\end{Prop}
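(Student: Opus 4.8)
The plan is to exploit the description of $\mathsf R$ given in the proof of Proposition~\ref{refined}, namely that it is the free monad on the pointed endofunctor $(R', \Lambda')$. The first step is therefore to invoke the standard fact---part of the universal property of the free monad on a pointed endofunctor, as constructed in~\cite{Kelly1980unified}---that the category of $\mathsf R$-algebras is isomorphic, over $\C^\mathbf 2$, to the category of algebras for the pointed endofunctor $(R', \Lambda')$. This reduces the entire Proposition to an explicit analysis of $(R',\Lambda')$-algebras, for which we have the concrete pushout formula for $R'f = \rho'_f$ and $\lambda'_f$ recalled in that proof. Throughout I would work with the notation of the construction (the lifted map being $f \colon X \to Y$ and the generating cofibration $j \colon A \to B$), which matches the statement after the evident renaming.

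For the object part, I would unpack an algebra structure $a \colon R'f \to f$ in $\C^\mathbf 2$ together with its single axiom $a \cdot \Lambda'_f = \id_f$. Writing $a = (s, t)$ with $s \colon K'f \to X$ and $t \colon Y \to Y$, the axiom forces $t = \id_Y$ and $s \cdot \lambda'_f = \id_X$, while the requirement that $a$ be a morphism of $\C^\mathbf 2$ gives $f \cdot s = \rho'_f$. Since $K'f$ is by construction the pushout of $\sum_x j_x$ along $\spn{h_x}_x$, its universal property shows that a map $s$ with $s \cdot \lambda'_f = \id_X$ is precisely a family $d_x \colon B_x \to X$ satisfying $d_x \cdot j_x = h_x$, one for each $x = (j,h,k) \in S$; and the condition $f \cdot s = \rho'_f$ translates into $f \cdot d_x = k_x$ for each $x$. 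As $x$ ranges over exactly the pairs consisting of a generating cofibration $j \in I$ and a square $(h,k) \colon j \to f$, and as $d_x \cdot j_x = h_x$ together with $f \cdot d_x = k_x$ say precisely that $d_x$ is a diagonal filler for that square, this is the asserted data, subject to no further conditions---the latter clause being exactly what the passage to the free monad on a \emph{pointed} endofunctor secures, since such algebras carry no associativity constraint.

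For the morphism part, I would take a map $\phi = (u,v) \colon f \to f'$ in $\C^\mathbf 2$ and write out the condition $\phi \cdot a = a' \cdot R'\phi$ for it to be a morphism of $(R',\Lambda')$-algebras. The delicate point is to compute the functorial action $R'\phi$: the square $\phi$ reindexes $S_f$ into $S_{f'}$ by sending $(j,h,k)$ to $(j,\, u \cdot h,\, v \cdot k)$, and $R'\phi$ is the map on pushouts induced by this reindexing together with $u$ and $v$. Tracing the algebra-morphism equation back through the two pushout presentations then exhibits it as equivalent to the family of equations $u \cdot d_x = d'_{(j,\, u h,\, v k)}$, which is exactly the assertion that $\phi$ commutes strictly with the chosen liftings. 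I expect this bookkeeping---keeping track of the reindexing of $S$ and of the induced maps on pushouts---to be the main obstacle, the object-level computation being comparatively routine.

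Finally, for the ``moreover'' clause, I would appeal to the reconstruction of a monad from its category of algebras: a monad on $\C^\mathbf 2$ is determined up to unique isomorphism by its Eilenberg--Moore category together with the forgetful functor to $\C^\mathbf 2$, so the concrete description just obtained pins down $\mathsf R$ uniquely. To recover $\mathsf L$, I would observe that the underlying functorial realisation $(K, \lambda, \rho)$ is already legible from the pointed endofunctor $(R, \Lambda)$ underlying $\mathsf R$---as in Remark~\ref{rk1}, one reads off $K$, $\rho$ and $\lambda$---whence $L$ and the copointing $\Phi$ are determined; the comonad comultiplication and the distributive law governing $(\mathsf L, \mathsf R)$ are then fixed as in~\cite{Garner2008Understanding}. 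Thus the characterisation of $\mathsf R$-algebras determines the whole pair $(\mathsf L, \mathsf R)$ up to unique isomorphism.
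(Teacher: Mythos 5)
The paper's own ``proof'' of this Proposition is nothing but a pointer to \cite[Proposition 5.4]{Garner2008Understanding}, so you are in effect reconstructing the argument of the cited reference rather than paralleling anything in the text; and your reconstruction is essentially correct. Your reduction to algebras for the pointed endofunctor $(R', \Lambda')$ is the right move and matches the construction sketched in the proof of Proposition~\ref{refined}; the one thing you should make explicit is that this step needs the free monad to be \emph{algebraically} free (so that $\mathsf R\text-\cat{Alg} \cong (R',\Lambda')\text-\cat{Alg}$ over $\C^\mathbf 2$, not merely that $\mathsf R$ enjoys a one-dimensional universal property among monads), which holds here by Kelly's results since $\C^\mathbf 2$ is locally presentable, hence complete---and also that the ``adaptation'' of the free monad construction which simultaneously produces $\mathsf L$ does not change the monad $\mathsf R$, as the statement of Proposition~\ref{refined} implicitly asserts. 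Your unpacking of the object and morphism parts via the pushout presentation of $K'f$ (the unit axiom forcing the $X$-component to be $\id_X$, leaving exactly a family of fillers $d_x$ with $d_x j_x = h_x$ and $f d_x = k_x$; the reindexing $(j,h,k) \mapsto (j, uh, vk)$ computing $R'\phi$) is accurate and complete. The only thin spot is the ``moreover'' clause: you correctly observe that the characterisation of $\mathsf R\text-\cat{Alg}$ over $\C^\mathbf 2$ determines the monad $\mathsf R$ up to unique isomorphism, and that the copointed endofunctor $(L,\Phi)$ is then legible from $(R,\Lambda)$; but the comultiplication $\Sigma$ and the distributive law $\Delta$ are genuinely \emph{extra structure} not recoverable from the monad alone, and your sentence ``are then fixed as in \cite{Garner2008Understanding}'' papers over the real content. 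The honest argument either invokes the universal property of $(\mathsf L, \mathsf R)$ as the free algebraic realisation on $I$, or observes that the described category of maps-with-chosen-fillers carries a canonical composition law (composing lifts) and that the double-categorical semantics of such realisations is fully faithful, so that this composition pins down $\Sigma$ and $\Delta$. Since the paper itself defers wholesale to the same reference, this deferral is forgivable, but in a self-contained write-up it is the one step you would still owe.
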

\begin{proof}
See \cite[Proposition 5.4]{Garner2008Understanding}.
\end{proof}
Dually, we may think of coalgebras for the comonad $\mathsf L$ as cofibrations
equipped with extra data. There is much less that can be said about these at a
general level: however, a good intuition is that if the cofibrations are
retracts of relatively free things then the $\mathsf L$-coalgebras are the
relatively free things of which they are retracts.\footnote{The problem of
ascertaining circumstances under which this intuition is valid is closely
related to the following old problem: given an adjunction which is known to be
monadic, under which circumstances is it also comonadic?}

\begin{Rk}\label{cofibcomonad}
Observe that for any algebraically realised w.f.s.\ $(\mathsf L, \mathsf R)$ on
a category with initial object, the chosen cofibrant replacements underlie a
\emph{cofibrant replacement comonad} \mbox{$\mathsf Q = (Q, \epsilon, \delta)$}
which is the restriction and corestriction of $\mathsf L$ to the coslice under
$0$. The concept of a cofibrant replacement comonad was first considered by
\cite{Radulescu-Banu1999Cofibrance}, though it should be noted that the
comonads constructed there do not coincide with the ones obtained from
Proposition~\ref{refined}. Indeed, they are built using the small object
argument, and so suffer from the same dependence upon a regular cardinal
$\alpha$ that we noted in Proposition~\ref{realisation}.
\end{Rk}

\begin{Ex}\label{ex2}
Consider the category $\cat{Ch}(R)$ of positively graded chain complexes of
$R$-modules, equipped with the set of generating cofibrations $I :=
\set{\partial y(i) \hookrightarrow y(i)}{i \in \mathbb N}$. Here $y(i)$ is the
representable chain complex at $i$, with components given by
\begin{equation*}
    y(i)_n = \begin{cases} R & \text{if $n = i$ or $n = i-1$;} \\ 0 &
    \text{otherwise,}\end{cases}
\end{equation*}
and as differential, the identity map $R \to R$ at stage $i$ and the zero map
elsewhere. The chain complex $\partial y(i)$ is its boundary, whose components
are
\begin{equation*}
    \partial y(i)_n = \begin{cases} R & \text{if $n = i-1$;} \\ 0 &
    \text{otherwise,}\end{cases}
\end{equation*}
and whose differential is everywhere zero. Since $\cat{Ch}(R)$ is locally
finitely presentable, we may apply Proposition~\ref{refined} to obtain an
algebraically realised w.f.s.\ $(\mathsf L, \mathsf R)$. We describe the
cofibrant replacement $\epsilon_X \colon QX \to X$ that this provides for $X
\in \cat{Ch}(R)$. The chain complex $QX$ will be free in every dimension; and
so it suffices to specify a set of free generators for each $(QX)_i$ and to
specify where each generator should be sent by the differential $d'_i \colon
(QX)_i \to (QX)_{i-1}$ and the counit $\epsilon_i \colon (QX)_i \to X_i$. We do
this by induction over $i$:
\begin{itemize}
\item For the base step, $(QX)_0$ is generated by the set $\set{x}{x \in
    X_0}$, and $\epsilon_0 \colon (QX)_0 \to X_0$ is specified by
    $\epsilon_0(x) = x$ and $d'_0 \colon (QX)_0 \to 0$ is the zero map;
\item For the inductive step, $(QX)_{i+1}$ (for $i \geqslant 0$) is
    generated by the set
\begin{equation*}
    \set{(x, z)}{x \in X_{i+1},\ z \in \ker d'_i,\ \epsilon_i(z) =
d_{i+1}(x)}\text,
\end{equation*}
whilst $\epsilon_{i+1} \colon (QX)_{i+1} \to X_{i+1}$ and $d'_{i+1} \colon
(QX)_{i+1} \to (QX)_i$ are specified by
\begin{equation*}
    \epsilon_{i+1}(x, z) = x \qquad \text{and} \qquad d'_{i+1}(x, z) =
    z\text.
\end{equation*}
\end{itemize}
Note that, in particular, we may view any $R$-module $M$ as a chain complex
concentrated in degree $0$; whereupon the above construction reduces to the
usual bar resolution of $M$. We can characterise a typical $\mathsf
Q$-coalgebra as being given by a chain complex $X$ equipped with, for each $i
\in \mathbb N$, a subset $G_i \subset X_i$ for which the inclusion map $G_i
\hookrightarrow X_i$ exhibits $X_i$ as the free $R$-module on $G_i$.
\end{Ex}

\subsection{Constructions on w.f.s.'s}
We end this section by reviewing two standard techniques for transferring
w.f.s.'s between categories that we shall need in the sequel. In both cases, we
assume the category $\C$ is locally presentable, so that we may freely apply
Proposition~\ref{refined}. For the first transfer technique, we consider
passage to the slice.

\begin{Prop}\label{slice}
  If $(\ELL, \R)$ is a weak factorisation system on $\C$, and \mbox{$X \in
  \C$}, then there is an induced weak factorisation system $(\ELL', \R')$
  on $\C / X$ for which $\ELL'$ and $\R'$ are the preimages of $\ELL$ and
  $\R$ under the forgetful functor $U \colon \C / X \to \C$. If $I$ is
  a set which cofibrantly generates $(\ELL, \R)$, then the set $I'$ of preimages of
  $I$ under $U$ generates $(\ELL', \R')$; and if we let $(\mathsf L,
  \mathsf R)$ and $(\mathsf L', \mathsf R')$ denote the universal
  algebraic realisations of $I$ and $I'$, then there is a functor
  $\tilde U \colon \mathsf R'\text-\cat{Alg} \to \mathsf
  R\text-\cat{Alg}$ making the following diagram a pullback:
\[
\cd{
  \mathsf R'\text-\cat{Alg} \ar[d] \ar[r]^{\tilde U} & \mathsf R\text-\cat{Alg} \ar[d] \\
  (\C / X)^\mathbf 2 \ar[r]_-{U^\mathbf 2} & \C^\mathbf 2\text.
}\]
\end{Prop}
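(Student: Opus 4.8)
The plan is to treat the three assertions in turn, deriving each from the basic feature of the forgetful functor $U \colon \C/X \to \C$: a commutative square in $\C/X$ is precisely a commutative square in $\C$ all of whose vertices carry compatible maps to $X$, so that a filler in $\C$ is a filler in $\C/X$ as soon as it respects the maps to $X$. First I would check that $(\ELL', \R')$ is a weak factorisation system. Closure under retracts is immediate, since $U^\mathbf 2$ preserves retracts and $\ELL, \R$ are retract-closed. For the lifting axiom, given $i \in \ELL'$, $p \in \R'$ and a square between them in $\C/X$, the underlying square in $\C$ admits a filler $j$ because $Ui \wo Up$; and $j$ is automatically a map over $X$, since composing the structure map of $\mathrm{cod}(p)$ with $Up \cdot j$ reproduces the lower edge, hence the structure map of $\mathrm{dom}(j)$. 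For factorisation, one factors $Uf$ in $\C$ and equips the intermediate object with the composite of the codomain's structure map with the right-hand factor, checking that both halves become maps over $X$.

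For the second assertion the crux is to identify $I'^\downarrow$ with $\R' = U^{-1}(\R)$. Since $I \subseteq \ELL$ forces $I' \subseteq \ELL'$, any $p \in \R'$ lies in $I'^\downarrow$. Conversely, given $p \in I'^\downarrow$ and an arbitrary square from some $i \colon A \to B$ in $I$ to $Up$ in $\C$, I would lift it to the slice by declaring the structure map $B \to X$ on the codomain to be the composite of the lower edge with the structure map of $\mathrm{cod}(p)$; the remaining compatibilities making this a square in $\C/X$ between a member of $I'$ and $p$ then hold automatically. A filler supplied by $p \in I'^\downarrow$ is a filler for the original square, so $Up \in I^\downarrow = \R$. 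Having shown $I'^\downarrow = \R'$, the standard retract argument valid in any weak factorisation system (a map with the left lifting property against all of $\R'$ is a retract of its $(\ELL',\R')$-factorisation, hence lies in $\ELL'$) yields $I'^{\downarrow\uparrow} = \ELL'$, so that $I'$ generates $(\ELL', \R')$. One should note in passing that $I'$ is genuinely a set, being indexed by $I$ together with the hom-sets $\C(B, X)$, and that $\C/X$ remains locally presentable, so that Proposition~\ref{refined} does apply.

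The third assertion is the substantial one, and here I would lean entirely on the concrete description of algebras furnished by Proposition~\ref{characteriseralg}. An $\mathsf R'$-algebra is a map $p$ of $\C/X$ together with a choice of filler for every square from a member of $I'$ into $p$, and likewise an $\mathsf R$-algebra is a map of $\C$ with chosen fillers for all squares from $I$ into it. The plan is to exhibit, for each $p \colon (W, w) \to (Y, y)$ in $\C/X$, a bijection between squares from $I'$ into $p$ and squares from $I$ into $Up$: given a square $(h,k) \colon i \to Up$ with lower edge $k$, the codomain structure map is forced to be $v := y \cdot k$, and this recovers a unique member $i_v$ of $I'$ and a unique slice square over $(h,k)$, the process being inverse to applying $U$. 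The second key observation is that a filler $j$ for such a square in $\C$ is automatically a map over $X$, exactly as in the first two parts. Consequently, a choice of fillers for all $I'$-squares into $p$ is literally the same data as a choice of fillers for all $I$-squares into $Up$; this defines $\tilde U$ on objects, and since a morphism of algebras in either setting is precisely an underlying arrow commuting strictly with the chosen fillers, the same identification defines $\tilde U$ on morphisms and shows that the induced comparison from $\mathsf R'$-algebras into the pullback is an isomorphism of categories.

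The main obstacle I anticipate is not any single hard estimate but the bookkeeping of this third step. One must verify that the passage from an $I$-square over $Up$ to an $I'$-square over $p$ is a genuine bijection; the only real point is that distinct slice squares cannot share the same underlying square, which holds because the codomain structure map is determined as $y \cdot k$ by the lower edge. One must then confirm that this bijection is compatible with the strict commutation with fillers demanded of algebra morphisms, which follows because the bijection leaves the underlying arrows and the fillers unchanged. Once the bijection of squares and the automatic slice-compatibility of fillers are in hand, the pullback property is a formal consequence, so the genuine work lies in setting up these two observations cleanly.
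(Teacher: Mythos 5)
Your proof is correct and takes essentially the same route as the paper, whose own proof reads ``mostly trivial; the final part follows from the characterisation of $\mathsf R$-$\cat{Alg}$ given in Proposition~\ref{characteriseralg}''. Your bijection between squares from $I'$ into $p$ and squares from $I$ into $Up$ (with the codomain structure map forced to be $y \cdot k$), together with the observation that fillers in $\C$ are automatically maps over $X$, is exactly the detail the paper leaves implicit.
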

\begin{proof}
  Mostly trivial; the final part follows from the characterisation of
  \mbox{$\mathsf R$-$\cat{Alg}$} given in Proposition~\ref{characteriseralg}.
\end{proof}
Our second transfer technique allows us to lift a cofibrantly generated weak
factorisation system along a right adjoint functor. This process was first
described in the general context of model categories by Sjoerd
Crans~\cite{Crans1995Quillen}.

\begin{Prop}\label{liftradj}
  Let $(\ELL, \R)$ be a cofibrantly generated w.f.s.\ on $\C$, and suppose that
  \mbox{$F \dashv G \colon \D \to \C$} with $\D$ locally presentable.  Then
  there is a w.f.s.\ $(\ELL', \R')$ on $\D$ for which $\R'$ is the
  preimage of $\R$ under $G$. Moreover, if $I$ is a generating set for
  $(\ELL, \R)$, then $I' = \set{Fi}{i \in I}$ is a generating set for
  $(\ELL', R')$; and if we let $(\mathsf L, \mathsf R)$ and $(\mathsf L',
  \mathsf R')$ denote the universal algebraic realisations of $I$ and
  $I'$, then there is a functor $\tilde G \colon \mathsf
  R'\text-\cat{Alg} \to \mathsf R\text-\cat{Alg}$ making the following
  diagram a pullback:
\[
\cd{
  \mathsf R'\text-\cat{Alg} \ar[d] \ar[r]^{\tilde G} & \mathsf R\text-\cat{Alg} \ar[d] \\
  \D^\mathbf 2  \ar[r]_{G^\mathbf 2} & \C^\mathbf 2\text.
}\]
\end{Prop}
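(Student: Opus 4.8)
The plan is to reduce the whole statement to a single adjunction bijection between lifting problems in $\D$ and in $\C$, and then to read off both algebra categories concretely using the characterisation of $\mathsf R$-algebras supplied by Proposition~\ref{characteriseralg}. First I would establish the key lemma: for any $i \colon U \to V$ in $\C$ and any map $p$ in $\D$, we have $Fi \wo p$ in $\D$ if and only if $i \wo Gp$ in $\C$. This is immediate from the adjunction $F \dashv G$, whose transposition carries a commutative square from $Fi$ to $p$ to a commutative square from $i$ to $Gp$, and carries diagonal fillers to diagonal fillers, bijectively and naturally. Setting $\R' := G^{-1}(\R)$, the lemma shows that $p \in (I')^\downarrow$ precisely when $i \wo Gp$ for all $i \in I$, precisely when $Gp \in I^\downarrow = \R$, precisely when $p \in \R'$. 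Since $\D$ is locally presentable and $I'$ is a set, Proposition~\ref{smallobj} shows that $(\ELL', \R') := ((I')^{\downarrow\uparrow}, (I')^\downarrow)$ is a weak factorisation system on $\D$, whose right class has just been identified with $\R'$; this proves the first two assertions and simultaneously exhibits $I'$ as a generating set.

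For the pullback, I would first invoke Proposition~\ref{refined} to produce the universal algebraic realisation $(\mathsf L', \mathsf R')$ of $I'$, and then unwind the strict pullback $\D^{\mathbf 2} \times_{\C^{\mathbf 2}} \mathsf R\text-\cat{Alg}$ using Proposition~\ref{characteriseralg}. An object of this pullback is a map $q$ in $\D$ together with an $\mathsf R$-algebra structure on $Gq$, which by Proposition~\ref{characteriseralg} is exactly a choice, for every $i \in I$ and every commutative square from $i$ to $Gq$, of a diagonal filler subject to no further conditions. Transposing across the adjunction, such data correspond bijectively to a choice, for every $Fi \in I'$ and every commutative square from $Fi$ to $q$, of a diagonal filler — that is, to an $\mathsf R'$-algebra structure on $q$. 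Thus the canonical comparison functor $\mathsf R'\text-\cat{Alg} \to \D^{\mathbf 2} \times_{\C^{\mathbf 2}} \mathsf R\text-\cat{Alg}$ is bijective on objects, and composing it with the projection to $\mathsf R\text-\cat{Alg}$ defines $\tilde G$, which sends an $\mathsf R'$-algebra $q$ with its chosen liftings to the $\mathsf R$-algebra $Gq$ with the transposed liftings.

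I expect the only genuinely fiddly step to be the verification that this bijection of filler-choices respects the notion of algebra morphism: that a map of $\D^{\mathbf 2}$ commutes strictly with the chosen liftings of two $\mathsf R'$-algebras exactly when its image under $G^{\mathbf 2}$ commutes strictly with the transposed liftings of the corresponding $\mathsf R$-algebras. This is a pure diagram-chase using naturality of the adjoint transposition, but it is the place where one must take care to match the liftings on the two sides precisely; once it is done, the comparison functor is fully faithful as well as bijective on objects, hence an isomorphism of categories, and the square is a pullback. Everything else is a formal consequence of the adjunction together with Propositions~\ref{smallobj}, \ref{refined} and~\ref{characteriseralg}.
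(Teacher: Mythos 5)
Your proposal is correct and matches the paper's own argument: the paper's entire proof consists of the same key observation, namely the adjunction-induced bijection between lifting problems (and their fillers) for squares from $i$ to $Gp$ and squares from $Fi$ to $p$, declaring ``the remaining details straightforward''. Your write-up simply carries out those straightforward details explicitly --- identifying $(I')^{\downarrow}$ with $G^{-1}(\R)$ and unwinding the pullback via the characterisation of $\mathsf R$-algebras in Proposition~\ref{characteriseralg}, exactly as the paper intends.
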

\begin{proof}
The key observation is that there is a bijection between fillers for diagrams
of the forms
\begin{equation*}
\cd{
  U \ar[d]_i \ar[r]^f & GW \ar[d]^{Gp} \\
  V \ar[r]_g \ar@{.>}[ur]_{j} & GX
} \qquad \text{and} \qquad
\cd{
  FU \ar[d]_{Fi} \ar[r]^{\bar f} & W \ar[d]^{p} \\
  FV \ar[r]_{\bar g} \ar@{.>}[ur]_{\bar \jmath} & X\text,
}
\end{equation*}
where $\bar f$, $\bar g$ and $\bar \jmath$ denote the transposes of $f$, $g$
and $j$ under adjunction. The remaining details are straightforward.
\end{proof}

\section{Application to the theory of weak $\omega$-categories}
\subsection{The goal} By combining the material of the previous section with the techniques
outlined in the Introduction, we obtain a machinery that can weaken algebraic
structures in a canonical way. In this section, we will use this in the context
of Batanin's theory of weak $\omega$-categories~\cite{Batanin1998Monoidal} to
show that the canonical weakening of the theory of strict $\omega$-categories
is precisely the theory of weak $\omega$-categories singled out by Leinster
in~\cite{Leinster2004Operads}. Such a result is strongly hinted at in the work
of Batanin and Leinster (see in particular the remarks following Lemma 8.1
of~\cite{Batanin1998Monoidal}), but is never spelt out in detail; and so our
result serves as a clarification of the relationship between weak
$\omega$-categories and other kinds of weak algebraic structure.

\subsection{The ingredients} Recall that the key ingredients required for the
machinery of the Introduction are a base category $\C$; a notion of
higher-dimensionality on $\C$ arising from a weak factorisation system; a
category $\cat{Th}(\C)$ of theories on $\C$; and a particular theory $T \in
\cat{Th}(\C)$ we wish to weaken. We now describe each of these four ingredients
for our example.

\subsubsection{The base category} Our base category will be the category $\cat{GSet}$ of
\emph{globular sets}. This is the category $[\mathbb G^\op, \cat{Set}]$ of
presheaves over the \emph{globe category} $\mathbb G$, which in turn may be
presented as the free category on the graph
\begin{equation*}
    \cd{
    0 \ar@<3pt>[r]^-{\sigma_0} \ar@<-3pt>[r]_-{\tau_0} & 1 \ar@<3pt>[r]^-{\sigma_1} \ar@<-3pt>[r]_-{\tau_1} & 2 \ar@<3pt>[r]^-{\sigma_2} \ar@<-3pt>[r]_-{\tau_2} & \dots}
\end{equation*}
subject to the \emph{coglobularity equations} $\sigma_{n+1} \sigma_n =
\tau_{n+1} \sigma_n$ and $\sigma_{n+1} \tau_n = \tau_{n+1} \tau_n$ for all $n$.
Thus a globular set $X \in \cat{GSet}$ is given by sets $X_n$ of
\emph{$n$-cells} together with source and target functions $s_n, t_n \colon
X_{n+1} \to X_n$ subject to the \emph{globularity equations}, which assert that
the source and target $n$-cells of any $(n+1)$-cell are parallel.

\subsubsection{The weak factorisation system} \label{higeher} Our notion of higher-dimensionality on $\cat{GSet}$ will be obtained by a
\emph{Reedy category} technique~\cite{Reedy1974Homotopy}. The definition of a
Reedy category is quite subtle---see~\cite[Chapter 15]{Hirschhorn2003Model} for
instance---but we will not need the full generality here. Rather, we consider
the simpler notion of a \emph{direct category}; this being a small category
$\A$ which admits an identity-reflecting functor $\mathrm{dim} \colon \A \to
\gamma$ for some ordinal $\gamma$. For such a category $\A$, the presheaf
category $\hat \A := [\A^\op, \cat{Set}]$ comes equipped with canonical notions
of \emph{generating cell} and \emph{boundary}. The generating cells are the
representable presheaves $y(a) := \A(\thg, a)$; whilst their boundaries
$\partial y(a)$ are given by the coend
\begin{equation*}
    \partial y(a) := \!\!\!\!\!\!\!\!\!\!\int\limits^{\substack{b \in \A\\ \dim(b) < \dim(a)}}\!\!\!\!\!\!\!\!\!\! \A(b, a) \cdot y(b)
\end{equation*}
in $\hat \A$. The universal property of the displayed coend together with the
Yoneda lemma induces a canonical map of presheaves $\iota(a) \colon
\partial y(a) \to y(a)$; and so we obtain a set of generating cofibrations $I := \set{\iota(a)}{a \in \A}$.
Since any presheaf category is locally finitely presentable, we may apply
Proposition~\ref{refined} to obtain an algebraically realised w.f.s.\ on $\hat
\A$ generated by the set $I$.

The category $\mathbb G$ is a direct category, with $\gamma = \omega$ and
$\mathrm{dim}$ the unique identity-on-objects functor $\mathbb G \to \omega$;
and so applying the theory of the previous paragraph yields the following set
of generating cofibrations in $\cat{GSet}$:
  \[
  \begin{array}{c|*5{c}}
  n & \qquad 0 \qquad & \ 1 \ & \ 2 \ & \ 3 \  & \cdots \\ \hline \\
  \ccd[@R+2em@C+1em]{\partial y(n) \ar@{.>}[d]_{\iota(n)} \\ y(n)} &
  \ccd[@R+2em@C+1em]{\emptyset \ar@{.>}[d] \\ \bullet}&
  \ \ccd[@R+2em@C+1em]{ \bullet \ar@{}[r]_{}="a" & \bullet  \\ \bullet \ar[r]^{}="b" \ar@{.>}"a"; "b"& \bullet}\ &
  \ \ccd[@R+2em@C+1em]{ \bullet \ar@/^1em/[r] \ar@/_1em/[r]_{}="a" & \bullet  \\  \bullet \ar@/^1em/[r]^{}="b" \ar@/_1em/[r] \ar@{}[r] \ar@{=>}?(0.5)+/u  0.15cm/;?(0.5)+/d 0.15cm/ \ar@{.>}"a"; "b"& \bullet}\ &
  \ \ccd[@R+2em@C+1em]{ \bullet \ar@/^1em/[r] \ar@/_1em/[r]_{}="a" \ar@{}[r] \ar@{=>}?(0.65)+/u  0.15cm/;?(0.65)+/d 0.15cm/ \ar@{}[r] \ar@{=>}?(0.35)+/u  0.15cm/;?(0.35)+/d 0.15cm/ & \bullet  \\ \bullet \ar@{}[r] \ar@3?(0.5)+/l 0.13cm/;?(0.5)+/r 0.13cm/ \ar@/^1em/[r]^{}="b" \ar@/_1em/[r] \ar@{}[r] \ar@{=>}?(0.7)+/u  0.15cm/;?(0.7)+/d 0.15cm/ \ar@{}[r] \ar@{=>}?(0.3)+/u  0.15cm/;?(0.3)+/d 0.15cm/ & \bullet \ar@{.>}"a"; "b"}&
  \end{array}
  \]
\vskip0.5\baselineskip \noindent We may describe the  presheaves $\partial
y(n)$ explicitly as follows. We have that $\partial y(0) = 0$ and $\partial
y(1) = y(0) + y(0)$; whilst each subsequent $\partial y(n+2)$ is obtained as
the pushout
\begin{equation*}
\cd[@C+3em]{    y(n) + y(n) \ar[r]^{[y(\sigma_n),\, y(\tau_n)]} \ar[d]_{[y(\sigma_n),\, y(\tau_n)]} & y(n+1) \ar[d] \\ y(n+1) \ar[r] & \pullbackcorner  \partial y(n+2)\text.}
\end{equation*}
The inclusion maps $\iota(n) \colon \partial y(n) \to y(n)$ are given by taking
$\iota(0) \colon 0 \to y(0)$ to be the unique map; taking $\iota(1) \colon y(0)
+ y(0) \to y(1)$ to be $[y(\sigma_0),\, y(\tau_0)]$; and taking each subsequent
$\iota(n+2) \colon
\partial y(n+2) \to y(n+2)$ to be the map induced using the universal property of pushout with respect to the commutative square
\begin{equation*}
\cd[@C+3em]{    y(n) + y(n) \ar[r]^{[y(\sigma_n),\, y(\tau_n)]} \ar[d]_{[y(\sigma_n),\, y(\tau_n)]} & y(n+1) \ar[d]^{y(\sigma_{n+1})} \\ y(n+1) \ar[r]_{y(\tau_{n+1})} & y(n+2)\text.}
\end{equation*}

\subsubsection{The category of theories} We now give our notion of theory on $\cat{GSet}$. These will be
Batanin's \emph{globular operads}, which were introduced
in~\cite{Batanin1998Monoidal}; though our presentation of them will follow that
given in~\cite{Leinster2004Operads}. We may see globular operads as a
generalisation of $\cat{Set}$-based operads. Recall that we specify a
$\cat{Set}$-based operad by giving a collection $\set{\OH(n)}{n \in \mathbb N}$
of basic $n$-ary operations, together with data expressing how these operations
compose together. The collection of basic $n$-ary operations amounts to an
object $\OH$ of the category $\cat{Coll} = [\mathbb N, \cat{Set}]$: and any
such object induces a functor $\OH \otimes (\thg) \colon \cat{Set} \to
\cat{Set}$ given by
\begin{equation*}
    \OH \otimes X = \sum_{n} \OH(n) \times X^n\text.
\end{equation*}
In order for this functor to underlie a monad on $\cat{Set}$, we require that
$\OH$ should be a monoid with respect to the ``substitution'' tensor product on
$\cat{Coll}$, whose unit is $(0, 1, 0, 0, \dots)$, and whose binary tensor is
\begin{equation*}
    (A \otimes B)(n) = \sum_{\substack{k, n_1, \dots, n_k\\ n_1 + \dots + n_k = n}} A(n) \times B(n_k) \times \dots \times B(n_k)\text.
\end{equation*}
We call such a monoid an \emph{operad}; and define an algebra for an operad
$\OH$ to be an algebra for the induced monad $\OH \otimes (\thg)$ on
$\cat{Set}$.
\newcommand{\pd}{\mathrm{pd}}
We may specify globular operads and their algebras in a similar manner. First
we define the category $\cat{GColl}$ of collections of basic globular
operations. We can present this as the slice category $\cat{GSet} / \pd$, where
$\pd$ is the globular set of \emph{pasting diagrams}. If we write $(\thg)^\ast$
for the free monoid monad on $\cat{Set}$, then $\pd$ may be defined inductively
by
\begin{equation*}
    \pd_0 = \{\star\} \quad \text{and} \quad \pd_{n+1} = (\pd_n)^\ast\text,
\end{equation*}
with source and target maps given by $s_0 = t_0 = \mathord! \colon \pd_1 \to
\pd_0$, $s_{n+1} = (s_n)^\ast$ and $t_{n+1} = (t_n)^\ast$. However, we will
prefer to view $\cat{GColl}$ as $[\mathbb A^\op, \cat{Set}]$, where $\mathbb A$
is the category of elements of $\pd$. Any object $\OH \in [\mathbb A^\op,
\cat{Set}]$ induces a functor $\OH \otimes (\thg) \colon \cat{GSet} \to
\cat{GSet}$ given by
\begin{equation*}
    (\OH \otimes X)_i = \sum_{\pi \in \pd_i} \OH(\pi) \times \cat{GSet}(\hat \pi, X)\text,
\end{equation*}
where $\hat \pi$ is the realisation of $\pi$ as a globular set: see~\cite[\S
4.2]{Leinster2004Operads} for more details. In order for this functor to
underlie a monad on $\cat{GSet}$, we require that $\OH$ should be a monoid with
respect to the ``substitution'' tensor product on $\cat{GColl}$. A description
of this tensor product may be found in~\cite[\S 4.3]{Leinster2004Operads},
which we do not repeat since we do not need the details. We call a monoid with
respect to this tensor product a \emph{globular operad}; and define an algebra
for a globular operad $\OH$ to be an algebra for the induced monad $\OH \otimes
(\thg)$ on $\cat{GSet}$. A globular operad morphism is just a monoid morphism
in $\cat{GColl}$; and so we obtain a category $\cat{GOpd}$ of globular operads.

However, there is a small subtlety we must deal with. Part of the data for a
globular operad $\OH$ is a set of $0$-dimensional operations $\OH(\star)$,
where $\star$ is the unique element of $\pd_0$. The operad structure of $\OH$
descends to a monoid structure on the set $\OH(\star)$; and an $\OH$-algebra
structure on a globular set $X$ descends to a left action of $\OH(\star)$ on
the set of $0$-cells $X_0$. But if a globular operad $\OH$ is to represent a
theory of weak $\omega$-categories, then its monoid of $0$-dimensional
operations should be trivial, since we want the ``free weak $\omega$-category''
functor to be bijective on $0$-cells. In order for the general machinery to
take account of this fact, we take our category of theories to be the category
$\cat{NGOpd}$ of \emph{normalised globular operads}; this is the full
subcategory of $\cat{GOpd}$ whose objects are those globular operads with
$\OH(\star)$ a singleton. The restriction to normalised globular operads also
plays a central role in~\cite{Batanin2008Algebras}.

\subsubsection{The candidate theory} The fourth and final ingredient we require for our machinery is a theory $\T
\in \cat{NGOpd}$ which we wish to weaken. We take this to be the \emph{terminal
globular operad} $\T$ given by $\T(\pi) = 1$ for all $\pi \in \mathbb A$. This
embodies the theory of strict $\omega$-categories, in the sense that the
corresponding monad $\T \otimes (\thg)$ on $\cat{GSet}$ is the free strict
$\omega$-category monad.

\subsection{The transfer} \label{thetransfer} We now have all the ingredients needed for our machinery.
The first stage in applying it is to transfer the notion of
higher-dimensionality from $\cat{GSet}$ to $\cat{NGOpd}$. First we transfer
from $\cat{GSet}$ to $\cat{GColl} \cong \cat{GSet} / \pd$ using
Proposition~\ref{slice}. This yields a cofibrantly generated w.f.s. on
$\cat{GSet} / \pd$, with set of generating cofibrations
\begin{equation*}
    I':= \set{\cd[@-1em]{\partial y(n) \ar[rr]^-{\iota(n)} \ar[dr]_{\pi.\iota(n)} & & y(n) \ar[dl]^{\pi} \\ & \pd}}{n \in \mathbb N,\, \pi \in \pd_n}\text.
\end{equation*}
If we view $\cat{GColl}$ instead as $[\mathbb A^\op, \cat{Set}]$, then this set
 $I'$ has an alternative description. Indeed, $\mathbb A =
\mathrm{el}(\pd)$ is another example of a direct category so that the technique
described in \S\ref{higeher} may be applied; and it is easy to check that the
set $\set{\iota(\pi) \colon
\partial y(\pi) \to y(\pi)}{\pi \in \mathbb A}$ so obtained coincides with~$I'$.
The next step is to transfer this weak factorisation system from $\cat{GColl}$
to $\cat{NGOpd}$. We have adjunctions
\begin{equation}
    \cd[@C+1em]{
        \cat{NGOpd} \ar@<5pt>[r]^-{U} \ar@{}[r]|-{\top} &
        \cat{GOpd} \ar@<5pt>[r]^-{V} \ar@<5pt>[l]^-{F} \ar@{}[r]|-{\top} &
        \cat{GColl}\text: \ar@<5pt>[l]^-{H}
        }
\end{equation}
indeed, $\cat{NGOpd}$, $\cat{GOpd}$ and $\cat{GColl}$ are categories of models
for essentially-algebraic theories in the sense of Freyd; and both $U$ and $V$
are induced by forgetting essentially-algebraic structure, and so have left
adjoints. Essential algebraicity also implies that $\cat{NGOpd}$ is locally
finitely presentable, so that we may lift along $VU$ using
Proposition~\ref{liftradj} to obtain an algebraically realised w.f.s.\ on
$\cat{NGOpd}$, with set of generating cofibrations $I'' :=
\set{HF(\iota(\pi))}{\pi \in \mathbb A}$.

\subsection{The result} We are now ready to give our main result. We write $(\mathsf L, \mathsf R)$
for the universal algebraic realisation of the set of generating cofibrations
$I''$ in $\cat{NGOpd}$; and as in Remark \ref{cofibcomonad}, we write $\mathsf
Q$ for the cofibrant replacement comonad associated with $(\mathsf L, \mathsf
R)$.

\begin{Thm}\label{mainresult}
Applying the cofibrant replacement comonad $\mathsf Q$ of $\cat{NGOpd}$ to the
strict $\omega$-category operad $T$ yields the weak $\omega$-category operad
$L$ defined by Leinster in~\cite[\S 4]{Leinster2004Operads}.
\end{Thm}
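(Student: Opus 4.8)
The plan is to characterise the cofibrant replacement $\epsilon_T \colon QT \to T$ by a universal property and to show that this is precisely the universal property by which Leinster singles out $L$. The starting observation is that $QT \to T$ is nothing but the \emph{free $\mathsf R$-algebra on the object $0 \to T$} of $\cat{NGOpd}^{\mathbf 2}$. Indeed, as recorded in Remark~\ref{rk1}, the functor $Q$ is the corestriction to the coslice under $0$ of the comonad $\mathsf L$, so that $QT = \mathrm{cod}\,L(0 \to T)$ and $\epsilon_T = \rho_{(0 \to T)}$; viewing $\epsilon_T$ as an object of $\cat{NGOpd}^{\mathbf 2}$ it therefore coincides with $R(0 \to T)$ equipped with the unit $\Lambda_{(0 \to T)}$, which is exactly the free $\mathsf R$-algebra on $0 \to T$. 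Consequently, for any $\mathsf R$-algebra $p \colon P \to S$ and any map $T \to S$ of operads (these being the morphisms $(0 \to T) \to p$ in $\cat{NGOpd}^{\mathbf 2}$, since the component at $0$ is forced), there is a unique $\mathsf R$-algebra map $(QT \to T) \to (P \to S)$ lying over $T \to S$. Specialising to $S = T$ and $T \to S = \id_T$ shows that $QT \to T$ is the \emph{initial} $\mathsf R$-algebra with codomain $T$.

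It thus remains to read off what an $\mathsf R$-algebra with codomain $T$ actually is, and to see that the initial one is Leinster's $L$. This is where I would invoke the two transfer results. By Proposition~\ref{slice} followed by Proposition~\ref{liftradj}, an $\mathsf R$-algebra in $\cat{NGOpd}$ is a map $p$ whose image $VUp$ in $\cat{GColl}$ is an $\mathsf R$-algebra there, and by Proposition~\ref{characteriseralg} the latter is exactly a map of collections together with a choice of diagonal filler for every square
\begin{equation*}
\cd{
\partial y(\pi) \ar[r] \ar[d]_{\iota(\pi)} & VUP \ar[d]^{VUp} \\
y(\pi) \ar[r] & VUT
}
\end{equation*}
with $\pi \in \mathbb A$, subject to no compatibility conditions. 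When the codomain is the terminal operad $T$ we have $VUT = \pd$, so the bottom edge of each such square is uniquely determined; a filler is then precisely a choice, for each operation of $P$ lying over the boundary $\partial y(\pi)$, of an operation of $P$ over $\pi$ itself restricting to it. Unwinding the explicit description of $\iota(\pi) \colon \partial y(\pi) \to y(\pi)$ in the direct category $\mathbb A = \mathrm{el}(\pd)$, this is exactly the datum of a \emph{contraction} on $P$ in Leinster's sense: for every parallel pair of $P$-operations of matching source and target, a chosen operation of one dimension higher filling them. Here the passage to \emph{normalised} operads is what guarantees the absence of spurious contraction data in dimension $0$, matching Leinster's conventions.

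The heart of the argument, and the step I expect to be most delicate, is this identification of the lifting data against $I''$ with Leinster's contraction cells: one must compute the boundaries $\partial y(\pi)$ concretely from the pushout presentation, check that a lifting problem against $\iota(\pi)$ is precisely a parallel pair of operations, and confirm that an $\mathsf R$-algebra morphism (a map strictly commuting with the chosen fillers) is exactly an operad map preserving the contraction. Granting this, the $\mathsf R$-algebras with codomain $T$, together with their morphisms, form precisely Leinster's category of globular operads equipped with a contraction, and I showed above that $QT \to T$ is its initial object. Since $L$ is by definition \cite[\S 4]{Leinster2004Operads} the initial such operad, the counit $\epsilon_T \colon QT \to T$ exhibits an isomorphism $QT \cong L$, the $\mathsf Q$-comonad (equivalently $\mathsf L$-coalgebra) structure on $QT$ supplying the canonical contraction. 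This is the claimed identification.
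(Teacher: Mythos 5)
Your first two steps are sound and are essentially the paper's own argument in lightly different dress: the identification of $\epsilon_T \colon QT \to T$ with the free $\mathsf R$-algebra on $0 \to T$, hence with the initial object of the category of $\mathsf R$-algebras over $T$ (the paper phrases this via the monad $\mathsf P$ obtained by restricting $\mathsf R$ to the slice over $\T$, observing that $\mathsf Q(\T)$ and $\mathsf P(\I)$ are the same factorisation, so that $\mathsf Q(\T)$ underlies the initial $\mathsf P$-algebra); and the unwinding, via Propositions~\ref{slice}, \ref{liftradj} and~\ref{characteriseralg}, of the unconstrained lifting data against $I''$ as exactly Leinster's contraction cells, with the dimension-$0$ lifting trivialised by normality. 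All of this is correct and matches the paper.

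The genuine gap is in your final step. The category of $\mathsf R$-algebras over $T$ in $\cat{NGOpd}$ is \emph{not} ``precisely Leinster's category of globular operads equipped with a contraction'': it is only the full subcategory of $\cat{OWC}$ on the \emph{normalised} operads-with-contraction (the pullback of $\cat{OWC} \to \cat{GOpd} \leftarrow \cat{NGOpd}$, diagram~\eqref{pbsquare} of the paper). Leinster defines $L$ as initial in all of $\cat{OWC}$, where no normality is imposed, and $\cat{OWC}$ genuinely contains non-normalised objects: for instance the initial operad-with-augmented-contraction of the paper's closing Remark, whose monoid of $0$-ary operations is the free monoid on one generator, carries a contraction. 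Your argument therefore establishes only that $QT$ is initial among \emph{normalised} operads-with-contraction, and an initial object of a full subcategory need not be initial in the ambient category. The paper closes exactly this gap: it shows that $\tilde U \colon \mathsf P\text-\cat{Alg} \to \cat{OWC}$ is fully faithful (since $U$ is and the square is a pullback) and has a left adjoint by the adjoint lifting theorem (using cocompleteness of $\mathsf P\text-\cat{Alg}$ from essential algebraicity), so that $\mathsf P\text-\cat{Alg}$ is reflective in $\cat{OWC}$; it then invokes the nontrivial external fact that Leinster's $L$ \emph{is} normalised (citing Cheng), so that $(L, \lambda)$ lies in this subcategory and is consequently initial there. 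Your remark that normalisation ``matches Leinster's conventions'' misattributes its role: normality trivialises the dimension-$0$ lifting datum on the algebra side, but Leinster's $\cat{OWC}$ is not a category of normalised operads, and without knowing that $L$ is normalised your concluding identification $QT \cong L$ does not follow -- indeed, run in $\cat{GOpd}$ instead of $\cat{NGOpd}$ the same argument produces a different operad, as the paper's closing Remark warns.
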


In order to prove this, we must first recall what Leinster's operad $L$ is. The
central notion (cf.\ \cite[p.\ 139]{Leinster2004Operads}) is that of a
\emph{contraction} on an object of $C \in \cat{GColl}$. To give this we view
$C$ as a functor $\mathbb A^\op \to \cat{Set}$; now for each $\pi \in \pd_1$,
we define $P_\pi(C)$ to be the set $C(s_0(\pi)) \times C(t_0(\pi))$, whilst for
each $n \geqslant 2$ and $\pi \in \pd_{n}$, we define $P_\pi(C)$ to be the
pullback
\begin{equation*}
    \cd[@C+4em]{
        P_\pi(C) \pushoutcorner \ar[r] \ar[d] & C(s_{n-1}(\pi)) \ar[d]^{(s_{n-2}, t_{n-2})} \\
        C(t_{n-1}(\pi)) \ar[r]_-{(s_{n-2}, t_{n-2})} & C(s_{n-2}s_{n-1}(\pi)) \times C(t_{n-2}s_{n-1}(\pi))\text.
    }
\end{equation*}
A \emph{contraction} $\kappa$ on $C$ is now given by functions $\kappa_\pi
\colon P_\pi(C) \to C(\pi)$ for each $n \geqslant 1$ and $\pi \in \pd_n$ which
render commutative the evident triangles\label{pagereftriangles}
\begin{equation*}
    \cd[@C-3em]{P_\pi(C) \ar[dr] \ar[rr]^{\kappa_\pi} & & C(\pi) \ar[dl] \\ & C(t_{n-1}(\pi)) \times C(s_{n-1}(\pi))\text.}
\end{equation*}
Any morphism $f \colon C \to D$ in $\cat{GColl}$ induces morphisms $P_\pi(f)
\colon P_\pi(C) \to P_\pi(D)$ for each $n \geqslant 1$ and $\pi \in \pd_n$, so
that if $\kappa$ and $\lambda$ are contractions on $C$ and $D$ respectively, we
may say that $f$ \emph{preserves the contraction} just when $f(\pi) .
\kappa_\pi = \lambda_\pi . P_\pi(f)$. We now define the category $\cat{OWC}$ of
\emph{operads with contraction} to have:
\begin{itemize}
\item \textbf{Objects} being pairs $(\K, \kappa)$, where $\K \in
    \cat{GOpd}$ and $\kappa$ is a contraction on $U(K)$;
\item \textbf{Morphisms} $f \colon (\K, \kappa) \to (\K', \kappa')$ being
    maps $f \colon \K \to \K'$ of globular operads for which $U(f)$
    preserves the contraction.
\end{itemize}
The operad $L$ of Theorem \ref{mainresult} is now defined to be the underlying
operad $L$ of the initial object $(L, \lambda)$ of $\cat{OWC}$.

\begin{proof}
First note that as well as a cofibrant replacement comonad $\mathsf Q$, we also
have an ``acyclically fibrant replacement monad'' $\mathsf P$ on $\cat{NGOpd}$,
obtained by restricting and corestricting $\mathsf R$ to the slice over $\T$.
The object $\mathsf Q(\T)$ that we are interested in is given by the
universally determined factorisation of the unique map $\I \to \T$, where $\I$
is the initial object of $\cat{NGOpd}$. But this is equally well a description
of $\mathsf P(\I)$. It follows that we may characterise $\mathsf Q(\T)$ as the
underlying normalised operad of the initial $\mathsf P$-algebra.

Let us now use Propositions~\ref{characteriseralg}, \ref{slice} and
\ref{liftradj} to give an explicit description of the category of $\mathsf
P$-algebras. Recall from \S \ref{higeher} the set of maps $I = \set{\partial
y(n) \to y(n)}{n \in \N}$ in $\cat{GSet}$. Let us write $(\mathsf L_\cat{GSet},
\mathsf R_\cat{GSet})$ for the corresponding universally determined algebraic
realisation, and $\mathsf P_{\cat{GSet}/ \pd}$ for the restriction and
corestriction of $\mathsf R_\cat{GSet}$ to the slice over $\pd \in \cat{GSet}$.
Propositions~\ref{slice} and~\ref{liftradj} now tell us that we have a pullback
diagram
\begin{equation*}
    \cd{
      \mathsf P\text-\cat{Alg} \ar[r] \ar[d] & \mathsf P_{\cat{GSet}/ \pd}\text-\cat{Alg} \ar[d] \\
      \cat{NGOpd} \ar[r]_{VU} & \cat{GSet / \pd}\text;
    }
\end{equation*}
whilst Proposition~\ref{characteriseralg} provides us with an explicit
description of the category $\mathsf P_\cat{GSet / \pd}\text-\cat{Alg}$.
Putting these facts together, we find that to give an object of $\mathsf
P$-$\cat{Alg}$ is to give a normalised globular operad $\C$, together with
chosen fillers for every diagram of the form
\begin{equation}\label{diagfillers}
    \cd{
        \partial y(n) \ar[d]_{\iota(n)} \ar[r] & C \ar[d]^c \\
        y(n) \ar[r] \ar@{.>}[ur] & \pd\text,
    }
\end{equation}
where the arrow down the right is the underlying globular collection of $\C$.
Using the explicit construction of the maps $\iota(n)$ given in \S
\ref{higeher}, we see that to give chosen fillers in \eqref{diagfillers} is
trivial when $n = 0$ (by normalisation of $\C$); and that for $n \geqslant 1$,
it is precisely to give the functions $\kappa_\pi \colon P_\pi(C) \to C(\pi)$
described following the statement of Theorem~\ref{mainresult}, and so amounts
to giving a contraction on $C$. A similar argument shows that to give a
morphism of $\mathsf P$-$\cat{Alg}$ is to give a morphism of underlying
normalised globular operads $\C \to \D$ which preserves the contraction. Thus
we obtain a pullback diagram
\begin{equation}\label{pbsquare}
    \cd{
        \mathsf P\text-\cat{Alg} \ar[r]^{\tilde U} \ar[d] & \cat{OWC} \ar[d] \\
        \cat{NGOpd} \ar[r]_U & \cat{GOpd}\text.
    }
\end{equation}
By the remarks at the start of the proof, we will be done if we can show that
$\tilde U$ sends the initial object of $\mathsf P$-$\cat{Alg}$ to the initial
object of $\cat{OWC}$. Now, as we noted in \S 3.3, the functor $U \colon
\cat{NGOpd} \to \cat{GOpd}$ has a left adjoint; and an application of the
adjoint lifting theorem~\cite[Theorem 2]{Johnstone1975Adjoint} shows that
$\tilde U$ also has a left adjoint. Note that here we need the fact that
\mbox{$\mathsf P$-$\cat{Alg}$} is again describable in terms of
essentially-algebraic structure, and so cocomplete. Furthermore, $\tilde U$ is
fully faithful, because $U$ is and \eqref{pbsquare} is a pullback; and so we
may identify $\mathsf P$-$\cat{Alg}$ with a reflective subcategory of
$\cat{OWC}$. Thus we will be done if we can show that the initial object $(L,
\lambda)$ of $\cat{OWC}$ lies in this reflective subcategory; in other words,
if we can show that $L$ is normalised. But this is known to be the case:
see~\cite{Cheng2003Monad}, for example.
\end{proof}

\begin{Rk}
Note that the restriction to normalised globular operads is vital for the above
proof to go through. Indeed, were we to take the universal cofibrant
replacement for $\T$ in the category $\cat{GOpd}$ rather than $\cat{NGOpd}$,
then we would no longer obtain Leinster's operad $L$. By following the steps of
the above proof, we find that what we obtain instead is the initial
operad-with-augmented-contraction, where an \emph{augmented contraction} on a
collection $C \in \cat{GColl}$ is given by a contraction on $C$ together with a
chosen element of the set $C(\star)$ of $0$-dimensional operations. The initial
operad-with-augmented-contraction is no longer normalised; and in fact, it is
not hard to show that its monoid of $0$-dimensional operations is the free
monoid on one generator.
\end{Rk}

%%%% Bibliography
\bibliographystyle{acm}
\bibliography{rhgg2}

\begin{thebibliography}{10}

\bibitem{Batanin1998Monoidal}
{\sc Batanin, M.}
\newblock Monoidal globular categories as a natural environment for the theory
  of weak {$n$}-categories.
\newblock {\em Advances in Mathematics 136}, 1 (1998), 39--103.

\bibitem{Batanin2008Algebras}
{\sc Batanin, M., and Weber, M.}
\newblock Algebras of higher operads as enriched categories.
\newblock {\em Applied Categorical Structures 19}, 1 (2011), 93--135.


\bibitem{Beck1969Distributive}
{\sc Beck, J.}
\newblock Distributive laws.
\newblock In {\em Seminar on Triples and Categorical Homology Theory (ETH,
  Z\"{u}rich, 1966/67)}, vol.~80 of {\em Lecture Notes in Mathematics}.
  Springer, 1969, pp.~119--140.

\bibitem{Bousfield1977Constructions}
{\sc Bousfield, A.}
\newblock Constructions of factorization systems in categories.
\newblock {\em Journal of Pure and Applied Algebra 9}, 2-3 (1977), 207--220.

\bibitem{Cheng2003Monad}
{\sc Cheng, E.}
\newblock Monad interleaving: a construction of the operad for {L}einster's
  weak {$\omega$}-categories.
\newblock {\em Journal of Pure and Applied Algebra 214\/} (2009), 809--821.


\bibitem{Crans1995Quillen}
{\sc Crans, S.}
\newblock Quillen closed model structures for sheaves.
\newblock {\em Journal of Pure and Applied Algebra 101\/} (1995), 35--57.

\bibitem{Garner2008Understanding}
{\sc Garner, R.}
\newblock Understanding the small object argument.
\newblock {\em Applied Categorical Structures 17}, 3 (2009), 247--285.

\bibitem{Grandis2006Natural}
{\sc Grandis, M., and Tholen, W.}
\newblock Natural weak factorization systems.
\newblock {\em Archivum Mathematicum 42}, 4 (2006), 397--408.

\bibitem{Hirschhorn2003Model}
{\sc Hirschhorn, P.~S.}
\newblock {\em Model categories and their localizations}, vol.~99 of {\em
  Mathematical Surveys and Monographs}.
\newblock American Mathematical Society, 2003.

\bibitem{Hovey1999Model}
{\sc Hovey, M.}
\newblock {\em Model categories}, vol.~63 of {\em Mathematical Surveys and
  Monographs}.
\newblock American Mathematical Society, 1999.

\bibitem{Johnstone1975Adjoint}
{\sc Johnstone, P.~T.}
\newblock Adjoint lifting theorems for categories of algebras.
\newblock {\em Bulletin of the London Mathematical Society 7}, 3 (1975),
  294--297.

\bibitem{Kelly1980unified}
{\sc Kelly, G.~M.}
\newblock A unified treatment of transfinite constructions for free algebras,
  free monoids, colimits, associated sheaves, and so on.
\newblock {\em Bulletin of the Australian Mathematical Society 22}, 1 (1980),
  1--83.

\bibitem{Leinster2004Operads}
{\sc Leinster, T.}
\newblock Operads in higher-dimensional category theory.
\newblock {\em Theory and Applications of Categories 12}, 3 (2004), 73--194.

\bibitem{Quillen1967Homotopical}
{\sc Quillen, D.~G.}
\newblock {\em Homotopical algebra}, vol.~43 of {\em Lecture Notes in
  Mathematics}.
\newblock Springer-Verlag, 1967.

\bibitem{Radulescu-Banu1999Cofibrance}
{\sc Radulescu-Banu, A.}
\newblock {\em Cofibrance and Completion}.
\newblock PhD thesis, MIT, 1999.

\bibitem{Reedy1974Homotopy}
{\sc Reedy, C.}
\newblock Homotopy theory of model categories.
\newblock Unpublished note, available at \url{http://www-math.mit.edu/~psh/},
  1974.

\bibitem{Rosick'y2002Lax}
{\sc Rosick{\'y}, J., and Tholen, W.}
\newblock Lax factorization algebras.
\newblock {\em Journal of Pure and Applied Algebra 175\/} (2002), 355--382.

\end{thebibliography}

\end{document}